%
%
%


\documentclass[reqno]{mcom-l}

\usepackage[hidelinks]{hyperref}
\usepackage{amssymb}
\usepackage{mathtools}
\usepackage{enumitem}
\usepackage{algpseudocode}
\algtext*{EndWhile}
\algtext*{EndFor}
\algtext*{EndIf}

\renewcommand{\leq}{\leqslant}
\renewcommand{\geq}{\geqslant}

\newcommand{\R}{\mathbb{R}}
\newcommand{\Z}{\mathbb{Z}}
\newcommand{\norm}[1]{\lVert #1 \rVert}
\DeclareMathOperator{\FF}{\mathsf{F}}
\DeclareMathOperator{\MM}{\mathsf{M}}
\DeclareMathOperator{\ord}{ord}

\newlist{inputoutputlist}{itemize}{1}
\setlist[inputoutputlist]{label=--,topsep=0pt,leftmargin=20pt}

\copyrightinfo{2021}{David Harvey and Markus Hittmeir}

\newtheorem{thm}{Theorem}[section]
\newtheorem{lem}[thm]{Lemma}
\newtheorem{prop}[thm]{Proposition}

\theoremstyle{definition}
\newtheorem{algorithm}[thm]{Algorithm}

\theoremstyle{remark}
\newtheorem{rem}[thm]{Remark}

\numberwithin{equation}{section}

\begin{document}

\title[A log-log speedup for deterministic integer factorisation]%
{A log-log speedup for exponent one-fifth deterministic integer factorisation}


\author[D. Harvey]{David Harvey}
\address{School of Mathematics and Statistics, University of New South Wales, Sydney NSW 2052, Australia}
\email{d.harvey@unsw.edu.au}
\thanks{The first author was supported by the Australian Research Council (grant FT160100219).}

\author[M. Hittmeir]{Markus Hittmeir}
\address{SBA Research, Floragasse 7, A-1040 Vienna}
\email{mhittmeir@sba-research.org}
\thanks{SBA Research (SBA-K1) is a COMET Centre within the framework of COMET – Competence Centers for Excellent Technologies Programme
	and funded by BMK, BMDW, and the federal state of Vienna. The COMET Programme is managed by FFG}

\subjclass[2020]{Primary 11Y05}

\date{}

\dedicatory{Dedicated to Richard Brent on the occasion of his $(3 \times 5^2)$-th birthday}

\begin{abstract}
Building on techniques recently introduced by the second author,
and further developed by the first author,
we show that a positive integer~$N$ may be rigorously and
deterministically factored into primes in at most
\[ O\left( \frac{N^{1/5} \log^{16/5} N}{(\log\log N)^{3/5}}\right) \]
bit operations.
This improves on the previous best known result by a factor of $(\log \log N)^{3/5}$.
\end{abstract}

\maketitle

\section{Introduction}
\label{sec:intro}

The aim of this paper is to further refine a method for integer factorisation
that was recently introduced by the second author \cite{Hit-timespace},
and subsequently improved and simplified by the first author \cite{Har-onefifth}.
We insist that all algorithms under discussion be \emph{deterministic}
and that all complexity bounds be proved \emph{rigorously}.
Faster factoring algorithms are known if one allows randomisation,
heuristic arguments or quantum computers;
see \cite{CP-primes,Len-factoring,Wag-joy,Rie-factorization}.

We write $\FF(N)$ for the time required to compute the prime factorisation
of an integer $N \geq 2$, 
where ``time'' means ``bit operations'',
i.e., the number of steps performed by a deterministic Turing machine
with a fixed, finite number of linear tapes \cite{Pap-complexity}.
All integers are assumed to be encoded in the usual binary representation.

Prior to \cite{Hit-timespace}, the best known asymptotic bounds for $\FF(N)$
were of the form  $N^{1/4+o(1)}$.
These were achieved by methods going back to Pollard, Strassen and
Coppersmith; for further references see \cite{Har-onefifth}.
The algorithm presented in \cite{Hit-timespace} was the first
to break the $1/4$ barrier, achieving $\FF(N) = N^{2/9 + o(1)}$.
Shortly afterwards the first author made further progress
on the exponent \cite{Har-onefifth}, showing that
$\FF(N) = O(N^{1/5} \log^{16/5} N)$.
The main result of the present paper is the following incremental improvement.
\begin{thm}
\label{thm:main}
There is a deterministic integer factorisation algorithm achieving
 \[ \FF(N) = O\left( \frac{N^{1/5} \log^{16/5} N}{(\log\log N)^{3/5}}\right). \]
\end{thm}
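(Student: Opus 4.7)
The plan is to locate the dominant subroutine in the $O(N^{1/5} \log^{16/5} N)$ algorithm of \cite{Har-onefifth}, shave a factor of $\log\log N$ from its cost, and then re-balance the algorithm's free parameters so that this saving propagates, appearing in the final bound with exponent $3/5$.

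First, I would review \cite{Har-onefifth} carefully. At its core the algorithm searches for a prime divisor $p \leq N^{1/2}$ by evaluating a collection of polynomials $F_i \in (\Z/N\Z)[X]$ at many points via fast product and remainder trees, and then extracting a factor of $N$ through gcds. Writing $\MM(n)$ for the cost of multiplying two $n$-bit integers, the dominant step should be expressible as $(N^{1/2}/L) \cdot \MM(L \log N)$ for a free block-size parameter $L$, with a secondary contribution that scales as a different power of $L \log N$. Balancing the two terms and using $\MM(n) = O(n \log n)$ gives the exponent $1/5$ and the logarithmic factor $\log^{16/5} N$.

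Next, I would shave a $\log\log N$ factor from the dominant subroutine by packing several low-degree polynomial multiplications into one large integer multiplication via Kronecker substitution. Because coefficients modulo $N$ occupy only $\lceil \log_2 N \rceil$ bits while each elementary multiplicand is much wider, one can bundle $\Theta(\log\log N)$ independent multiplications without inflating the integer operands by more than a constant factor; combined with the $O(n \log n)$ multiplication algorithm this realises the saving at a judiciously chosen layer of the product tree. Re-optimising $L$ with the dominant term reduced by $\log\log N$ but the secondary term unchanged shifts the balance point by a small power of $\log\log N$, and a direct computation yields the factor $(\log\log N)^{3/5}$, the exponent $3/5$ arising from the ratio between the scaling powers of the dominant and secondary terms about the optimum.

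The main obstacle, I expect, is verifying compatibility of the packing step with the deterministic rigorous framework: one must check that the batched multiplications feed correctly into the remainder-tree gcd extraction, that none of the secondary costs (product trees, Pollard--Strassen-style transitions, handling of small factors and near-prime-powers) overtake the improved dominant cost, and that the packing parameter can be selected uniformly in $N$ without randomness. This amounts to a careful rework of the analysis of \cite{Har-onefifth}, tracking a third parameter alongside the two already present, and confirming that all bit-operation accounting on the multitape model survives the re-bundling.
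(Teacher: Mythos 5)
Your rebalancing arithmetic at the end is sound: in the actual proof the search cost splits into a ``giantstep'' term proportional to $M\lg^4 N$ and a ``babystep'' term proportional to $N^{1/2}\lg^2 N/M^{3/2}$ for a free parameter $M$, and if the first term is cut by a factor of $\log\log N$ then re-balancing does produce exactly a $(\log\log N)^{3/5}$ saving, the exponent $3/5$ arising as $1-2/5$ just as you suggest. But the mechanism you propose for obtaining the initial $\log\log N$ saving does not work, and it is not where the paper gets it. Bundling $k$ independent $n$-bit integer multiplications into a single $(kn)$-bit multiplication via Kronecker-style packing costs $\MM(kn)=O(kn\log(kn))$, which is never smaller than $k\,\MM(n)=O(kn\log n)$ once $\MM(n)=O(n\log n)$ is available; packing tricks of this kind only pay off when the base-level multiplications are performed inefficiently relative to their size, which is not the case in \cite{Har-onefifth} (whose multipoint evaluation already uses Bluestein's trick to remove the one spare logarithmic factor). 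There is no $\log\log N$ slack hiding in the polynomial arithmetic to be recovered.

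The paper's saving is number-theoretic rather than arithmetic: since a prime factor $p$ of $N$ cannot itself be divisible by any small prime, one may restrict the giantsteps to candidates for $p$ lying in residue classes modulo $m=2\cdot 3\cdot 5\cdots p_d$ that are coprime to $m$. By Mertens' theorem, $\phi(m)/m=\Theta(1/\log\log N)$ for $m=N^{\Theta(1)}$, which reduces the number of giantsteps --- and hence the $M\lg^4 N$ term --- by the desired factor. Making this compatible with the Lehman-style approximation of $aq+bp$ by $(4abN)^{1/2}$ requires a further new ingredient: rather than enumerating all small pairs $(a,b)$, the paper computes, for each admissible residue class $\sigma \bmod m$ and each scale $\sigma_0$, a suitable pair $(a,b)$ as a short vector in an explicit rank-$2$ lattice (via Lagrange--Gauss reduction), so that $aq+bp$ is simultaneously close to a known real number and congruent to a known residue modulo $m^2$. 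Neither of these ideas appears in your proposal, so as written the argument has no valid source for the $\log\log N$ gain that the rest of your plan presupposes.
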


Note that if $N$ has three or more prime factors (counting repetitions),
then at least one factor is bounded above by $N^{1/3}$,
and it is well known that such a factor may be found in time $N^{1/6+o(1)}$
(see for example \cite[Prop.~2.5]{Har-onefifth}).
Using this observation one easily reduces the proof of Theorem \ref{thm:main}
to the case that $N$ is either prime or semiprime,
i.e., a product of two distinct primes.
For details of the reduction, see the proof of \cite[Thm.~1.1]{Har-onefifth}.
For the rest of the paper, we assume that $N$ is either prime
or semiprime, say $N = pq$ where $p < q$.

The strategy of \cite{Har-onefifth} may be outlined as follows.
Fix some integer $\alpha$ coprime to $N$.
Since $p \equiv 1 \pmod{p-1}$,
Fermat's little theorem implies that
$\alpha^{aq+bp} \equiv \alpha^{aN+b} \pmod p$ for any $a, b \in \Z$.
If we can recover $aq + bp$, for known coefficients $a, b \neq 0$,
then it is easy to deduce $p$ and $q$, since we know the product $pq = N$
(see Lemma \ref{lem:ccheck}).
Now, it was observed by Lehman \cite{Leh-factoring}
(following ideas going back to Lawrence \cite{Law-factorisation})
that if $a$ are $b$ are small integers,
chosen so that $a/b$ is a good rational approximation to the unknown $p/q$,
then $aq + bp$ will be especially close to $(4abN)^{1/2}$.
This suggests rewriting the congruence as
\[
\alpha^{aq+bp-\lfloor(4abN)^{1/2}\rfloor} \equiv \alpha^{aN+b-\lfloor(4abN)^{1/2}\rfloor} \pmod p.
\]
When $a/b \approx p/q$, the left hand side has the form $\alpha^i$
where $i$ is ``small''.
Consequently, to solve for $aq+bp$,
it suffices to find a collision modulo $p$ between
the ``giantsteps'' $\alpha^{aN+b-\lfloor(4abN)^{1/2}\rfloor} \pmod N$,
where $a/b$ ranges over some sufficiently dense set of rational numbers,
and the ``babysteps'' $\alpha^i \pmod N$, where~$i$ ranges over small integers.
This collision-finding problem may be attacked using tools from
fast polynomial arithmetic,
and \cite{Har-onefifth} shows that this strategy
(after filling in many details left out in this rough sketch)
leads to the bound $\FF(N)= O(N^{1/5} \log^{16/5} N)$.

The new algorithm in this paper follows the same basic plan described above,
but utilises the additional information that $p$ and $q$
\emph{cannot themselves be divisible by small primes}.
We modify the algorithm so that it restricts attention to candidates for $p$
that are coprime to
$m \coloneqq 2 \times 3 \times 5 \times \cdots \times p_d \ll N^{1/2}$,
i.e., $m$ is the product of the first $d$ primes for suitable~$d$.
The number of possible values of $p$ modulo $m$ is $\phi(m)$
(the Euler totient function),
and for suitable choice of $m = N^{O(1)}$,
Mertens' theorem (see Lemma \ref{lem:pnt-mertens})
implies that the ratio $\phi(m)/m$ decays like $1/\log \log N$.
This is the source of the log-log savings over \cite{Har-onefifth}.
Actually, for technical reasons we reorganise the algorithm considerably:
instead of defining the giantsteps by taking all pairs $(a,b)$ in some range,
as is done in \cite{Har-onefifth},
we use algorithms for finding short lattice vectors to
\emph{compute} suitable values for $a$ and $b$ for each giantstep.

Of course it is well known that sieving on small primes in this way often
leads to $(\log \log N)$-type speedups.
For example, in the context of deterministic integer factorisation,
this idea was used in \cite{CH-factor} to improve the complexity of the
factoring algorithm of \cite{BGS-recurrences} by a factor of
$(\log \log N)^{1/2}$.

\section{Preliminaries}
\label{sec:prelim}

For $n$ a positive integer, we define
$\lg n \coloneqq \lfloor \log n / \log 2 \rfloor + 1$.
Observe that $\lg n \geq 1$ for all $n \geq 1$,
that $\lg n = \Theta(\log n)$ for $n \geq 2$,
and that $\lg n$ may be computed in time $O(\lg n)$.

We recall several facts about integer arithmetic.
All results stated here without specific references
may be found in textbooks such as \cite{vzGG-compalg3} or \cite{BZ-mca}.

Let $n \geq 1$, and assume that we are given integers $x$ and $y$ such that
$|x|, |y| \leq 2^n$.
We may compute $x + y$ and $x - y$ in time $O(n)$.
We write $\MM(n)$ for the cost of computing the product $xy$;
it was shown recently that $\MM(n) = O(n \lg n)$
\cite{HvdH-nlogn}.
If $y > 0$, we may compute the quotients $\lfloor x/y \rfloor$ and
$\lceil x/y \rceil$,
and therefore also the residue of $x$ modulo $y$ in the interval $[0,y)$,
in time $O(\MM(n))$.
More generally, for a fixed positive rational number $u/v$,
and assuming that $x, y > 0$, we may compute
$\lfloor (x/y)^{u/v} \rfloor$ and $\lceil (x/y)^{u/v} \rceil$
in time $O(\MM(n))$.
We may compute $g \coloneqq \gcd(x, y)$,
and if desired find $u, v \in \Z$ such that $ux + vy = g$
(i.e., solve the extended GCD problem),
in time $O(\MM(n) \lg n) = O(n \lg^2 n)$.

Next we consider modular arithmetic.
Let $N \geq 2$.
We write $\Z_N$ for the ring $\Z/N\Z$ of integers modulo $N$.
Elements of $\Z_N$ will always be represented by their residues in the
interval $[0, N)$, so occupy $O(\lg N)$ bits of storage.
We write $\Z_N^*$ for the group of units of $\Z_N$,
i.e., the subset of those $x \in \Z_N$ for which $\gcd(x, N) = 1$.
Given $x, y \in \Z_N$,
we may compute $x \pm y \in \Z_N$ in time $O(\lg N)$,
and $xy \in \Z_N$ in time $O(\MM(\lg N)) = O(\lg N \lg \lg N)$.
Given $x \in \Z_N$ and an integer $m \geq 1$,
we may compute $x^m \in \Z_N$ in time $O(\MM(\lg N) \lg m)$,
using the ``repeated squaring'' algorithm.
We may test whether $x \in \Z_N^*$,
and if so compute $x^{-1} \in \Z_N^*$,
in time $O(\MM(\lg N) \lg \lg N) = O(\lg N (\lg \lg N)^2)$,
by solving the extended GCD problem for $x$ and $N$.
If the gcd is not $1$ or $N$,
and if $N = pq$ is semiprime,
then we immediately recover $p$ and $q$ in time $O(\MM(\lg N))$.

The next few results are taken from the previous papers
\cite{Hit-BSGS} and \cite{Har-onefifth}.

\begin{lem}
\label{lem:tree}
Let $n\geq 1$ with $n=O(N)$.
Given as input $v_1,\ldots, v_n\in\Z_N$,
we may compute the polynomial $f(x)=(x-v_1)\cdots(x-v_n)\in\Z_N[x]$
in time $O(n \lg^3 N)$.
\end{lem}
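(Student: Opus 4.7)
The plan is to use the standard subproduct tree. First I would build a balanced binary tree with $n$ leaves, placing the linear polynomial $x - v_i \in \Z_N[x]$ at the $i$-th leaf. Then, working bottom-up, at each internal node I would store the product of the polynomials at its two children, so that the root eventually holds $f(x)$.

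The workhorse is multiplying two polynomials $g, h \in \Z_N[x]$ of degree at most $d$. For this I would use Kronecker substitution: since $d \leq n = O(N)$, the integer coefficients of the lifted product $gh$ (before reducing modulo $N$) are bounded by $dN^2 = O(N^3)$ and so fit in $O(\lg N)$ bits. Packing the coefficients of $g$ and $h$ into two integers of $O(d \lg N)$ bits and invoking one integer multiplication thus recovers $gh$ after a linear-time unpack and $O(d)$ reductions modulo $N$ of cost $O(\MM(\lg N))$ each. Using $\MM(m) = O(m \lg m)$ together with $d \lg N = O(N \lg N)$ (so that $\lg(d \lg N) = O(\lg N)$), the total cost of one such product is $O(d \lg^2 N)$.

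The final step is level-by-level accounting. At level $\ell$ above the leaves there are roughly $n / 2^\ell$ nodes, each multiplying two polynomials of degree $O(2^\ell)$ at cost $O(2^\ell \lg^2 N)$. This sums to $O(n \lg^2 N)$ per level, and the tree has depth $O(\lg n) = O(\lg N)$ by the hypothesis $n = O(N)$, yielding $O(n \lg^3 N)$ overall.

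The main obstacle, such as it is, lies in bounding the cost of a single polynomial product; once Kronecker substitution and $\MM(m) = O(m \lg m)$ are in hand, the telescoping over the levels of the subproduct tree is routine.
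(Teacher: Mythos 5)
Your proposal is correct and follows exactly the route the paper takes: the cited lemma is proved by a standard subproduct tree, with each node product carried out via Kronecker substitution and fast integer multiplication, and the level-by-level accounting you give yields the stated $O(n\lg^3 N)$ bound. Nothing further is needed.
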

\begin{proof}
This is exactly \cite[Lem.~2.3]{Har-onefifth};
the proof depends on a standard application of a product tree.
By ``compute a polynomial'' we mean compute a list of its coefficients.
\end{proof}

\begin{rem}
  In the above lemma,
  the hypothesis ``$n = O(N)$'' means that for every constant $C > 0$,
  the lemma holds for all $n$ and $N$ in the region $n < CN$,
  where the implied big-$O$ constant in the target bound $O(n \lg^3 N)$
  may depend on $C$.
  For the rest of the paper, a similar remark applies whenever we use
  the big-$O$ notation in this way.
\end{rem}

\begin{lem}
\label{lem:bluestein}
Given as input an element $\alpha\in\Z_N^*$,
positive integers $n,\kappa=O(N)$, and $f\in\Z_N[x]$ of degree $n$,
we may compute $f(1),f(\alpha),\ldots ,f(\alpha^{\kappa-1})\in\Z_N$
in time $O((n+\kappa)\lg^2 N)$.
\end{lem}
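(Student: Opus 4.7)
The plan is to apply the classical Bluestein/chirp-transform trick, which converts evaluation at a geometric progression into a single polynomial multiplication via the identity $ij = \binom{i+j}{2} - \binom{i}{2} - \binom{j}{2}$. Writing $f(x) = \sum_{j=0}^{n} c_j x^j$, this yields
\[ f(\alpha^i) = \alpha^{-\binom{i}{2}} \sum_{j=0}^{n} \bigl(c_j \alpha^{-\binom{j}{2}}\bigr) \alpha^{\binom{i+j}{2}}. \]
Setting $a_j \coloneqq c_j \alpha^{-\binom{j}{2}}$ and $b_k \coloneqq \alpha^{\binom{k}{2}}$, and forming $A(x) \coloneqq \sum_{j=0}^n a_j x^{n-j}$ and $B(x) \coloneqq \sum_{k=0}^{n+\kappa-1} b_k x^k$ in $\Z_N[x]$, the inner sum above is exactly the coefficient of $x^{n+i}$ in $A(x)B(x)$ for each $i \in \{0,1,\ldots,\kappa-1\}$.

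To prepare the coefficients, I would first compute $\alpha^{-1} \in \Z_N^*$ by extended GCD in time $O(\lg N(\lg \lg N)^2)$. Using the telescoping relations $\alpha^{k+1} = \alpha \cdot \alpha^k$ and $\alpha^{\binom{k+1}{2}} = \alpha^{k} \cdot \alpha^{\binom{k}{2}}$ (together with their $\alpha^{-1}$ analogues), I tabulate $\alpha^{\pm \binom{k}{2}}$ for $0 \leq k \leq n+\kappa-1$ with $O(n+\kappa)$ modular multiplications, hence in time $O((n+\kappa)\lg N \lg \lg N)$. Assembling the coefficient lists of $A$ and $B$ from these tables costs the same.

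The dominant step is the multiplication $A(x)B(x)$, which I handle via Kronecker substitution: encode each polynomial as an integer of bit length $O((n+\kappa)\lg N)$, perform one integer multiplication, and reduce the $O(n+\kappa)$ resulting coefficients modulo $N$. Since $n,\kappa = O(N)$ the quantity $\lg((n+\kappa)\lg N)$ is $O(\lg N)$, so the integer multiplication costs
\[ \MM\bigl((n+\kappa)\lg N\bigr) = O\bigl((n+\kappa)\lg N \cdot \lg((n+\kappa)\lg N)\bigr) = O\bigl((n+\kappa)\lg^2 N\bigr), \]
while the $O(n+\kappa)$ coefficient reductions add only $O((n+\kappa)\lg N \lg \lg N)$. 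Multiplying each extracted coefficient of $x^{n+i}$ by the precomputed scalar $\alpha^{-\binom{i}{2}}$ then recovers $f(\alpha^i)$ in a further $O(\kappa \lg N \lg \lg N)$, and summing all contributions yields the claimed bound.

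The one point requiring genuine care — and what I view as the main obstacle — is the collapse of the polylogarithmic factor in $\MM((n+\kappa)\lg N)$ to a single $\lg N$; this is precisely where the hypothesis $n,\kappa = O(N)$ is used. Everything else is routine bookkeeping on top of the Bluestein identity.
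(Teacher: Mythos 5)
Your proof is correct and follows exactly the route the paper intends: the lemma is quoted from \cite{Har-onefifth} (Lem.~2.4), whose proof is precisely this Bluestein/chirp-transform reduction of geometric-progression evaluation to a single polynomial product, with the product handled so that only one factor of $\lg N$ beyond the input size is incurred. You have simply filled in the details that the paper outsources to the citation, and your bookkeeping (the telescoping tabulation of $\alpha^{\pm\binom{k}{2}}$, the Kronecker substitution, and the use of $n,\kappa=O(N)$ to collapse $\lg((n+\kappa)\lg N)$ to $O(\lg N)$) is accurate.
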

\begin{proof}
This is exactly \cite[Lem.~2.4]{Har-onefifth}.
The proof relies on Bluestein's algorithm \cite{Blu-dft},
which saves a logarithmic factor over a general multipoint evaluation.
\end{proof}

\begin{lem}
\label{lem:ccheck}
Given as input an integer $N\geq 2$ such that
$N$ is either a prime or semiprime $pq$,
and integers $a,b,u$ with at most $O(\lg N)$ bits,
we may test if $u$ is of the form $aq+bp$, and if so recover $p$ and $q$,
in time $O(\lg N\lg\lg N)$.
\end{lem}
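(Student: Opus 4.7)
The plan is to reduce the test to solving a quadratic. If $N = pq$ and $u = aq + bp$, then substituting $q = N/p$ into the second equation and multiplying by $p$ yields
\[
bp^2 - up + aN = 0,
\]
so $p = (u \pm \sqrt{D})/(2b)$, where $D \coloneqq u^2 - 4abN$. Thus $p$ is determined (up to the sign ambiguity) by $u$, $a$, $b$ and $N$, and the test reduces to solving the quadratic and then verifying that a candidate root genuinely splits $N$ in the required way.

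Concretely, I would proceed as follows. First handle the degenerate cases where $ab = 0$: if $b = 0$, check whether $a \mid u$, set $q \coloneqq u/a$, test $1 < q < N$ and $q \mid N$, and set $p \coloneqq N/q$; the case $a = 0$ is symmetric, and the case $a = b = 0$ is trivial. Otherwise compute $D$; reject if $D < 0$. Compute $s \coloneqq \lfloor D^{1/2}\rfloor$ and reject unless $s^2 = D$. For each of the two candidate roots $p \coloneqq (u \pm s)/(2b)$, check that $2b$ exactly divides the numerator, that $1 < p < N$, and that $p \mid N$; if so, set $q \coloneqq N/p$ and verify that $aq + bp = u$, in which case accept and return $(p,q)$. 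This last verification also correctly rejects the input when $N$ is prime (where the only candidate that could pass the divisibility test would be the trivial $p = 1$).

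For the complexity: since $a$, $b$, $u$ each have $O(\lg N)$ bits, every intermediate quantity — $u^2$, $4abN$, $D$, $s$, the candidate $p$, and $q = N/p$ — also has $O(\lg N)$ bits. Each arithmetic step used (addition, multiplication, integer square root, Euclidean division, divisibility test) costs $O(\MM(\lg N)) = O(\lg N \lg\lg N)$ by the facts recorded in Section~\ref{sec:prelim}, and only $O(1)$ such steps are performed, giving the claimed total $O(\lg N \lg\lg N)$. There is essentially no technical obstacle: the argument is routine, with the only care needed being the handling of the degenerate $ab = 0$ cases and the final verification that rules out spurious candidates.
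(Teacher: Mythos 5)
Your proposal is correct and is essentially the paper's own argument: the cited proof observes that $aq$ and $bp$ are the roots of $y^2-uy+abN$, and your quadratic $bp^2-up+aN=0$ is exactly this polynomial under the substitution $y=bp$, with the same discriminant $u^2-4abN$, the same perfect-square test, and the same $O(1)$ arithmetic operations of cost $O(\MM(\lg N))$ each. The only (harmless) extra care you add is the explicit treatment of the degenerate cases $ab=0$ and the final verification, both of which are consistent with the lemma as stated.
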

\begin{proof}
This is a trivial modification of \cite[Lem.~3.1]{Har-onefifth}.
The proof depends on observing that $aq$ and $bp$ must be
roots of the polynomial $y^2 - uy + abN$.
\end{proof}

\begin{lem}
\label{thm:2018}
There is an algorithm with the following properties.
It takes as input integers $N\geq 2$ and $D$ such that $N^{2/5}\leq D\leq N$.
It returns either some $\beta\in\Z_N^*$ with $\ord_N(\beta)>D$,
or a nontrivial factor of $N$, or ``$N$ is prime''.
Its runtime is bounded by
\[
O\left(\frac{D^{1/2}\lg^2 N}{(\lg\lg D)^{1/2}} \right).
\]
\end{lem}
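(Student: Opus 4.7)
The plan is to test candidates $\beta\in\Z_N^*$ for large order via a sieved baby-step giant-step (BSGS) search, powered by the polynomial tree of Lemma~\ref{lem:tree} and the geometric-progression evaluation of Lemma~\ref{lem:bluestein}.  The log-log factor in the target bound strongly suggests the small-prime sieve mentioned in the introduction: I would first fix a primorial modulus $m = 2\cdot 3\cdot 5\cdots p_d$, choosing the parameter $d$ via Mertens' theorem so that $\phi(m)/m = \Theta(1/\lg\lg D)$.  The integers in $[1,D]$ coprime to $m$ then have density $\Theta(1/\lg\lg D)$, and exploiting this coprimality condition in the BSGS index sets is what furnishes the $(\lg\lg D)^{-1/2}$ saving.

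Next I would iterate over a short family of candidate bases $\beta$ (for example the first few primes).  For each $\beta$, first compute $\gcd(\beta,N)$: if nontrivial, return a factor; otherwise we have $\beta\in\Z_N^*$.  Then split $[0,D]$ in the form $e = js+i$ with $s,t\asymp \sqrt{D/\lg\lg D}$ and $\gcd(s,m)=1$, and build the baby-step polynomial
$P(x)=\prod_{i\in I}(x-\beta^i)\in\Z_N[x]$
for a sieved index set $I\subseteq [0,s)$ using Lemma~\ref{lem:tree}, noting that the geometric structure of the roots allows the construction in $O(s\lg^2 N)$ rather than the generic $O(s\lg^3 N)$.  Evaluate $P$ at the giant-step geometric progression $\beta^{js}$ for $j$ ranging over a sieved subset of $[1,t]$ via Lemma~\ref{lem:bluestein}, at cost $O((s+t)\lg^2 N)$.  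A vanishing evaluation yields a relation $\beta^e\equiv 1\pmod N$ with $e\leq D$; from $e$, try to extract a nontrivial square root of $1$ modulo $N$ by repeated halving (Miller--Rabin style), which either produces a nontrivial factor of $N$ (recoverable via Lemma~\ref{lem:ccheck}) or marks $\beta$ as a strong liar.  If the BSGS search instead finds no relation, return $\beta$ itself as an element of order exceeding $D$.

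If every $\beta$ in the family turns out to be a strong liar and none exhibits order exceeding $D$, fall back on a deterministic polynomial-time primality certificate (e.g.\ AKS), whose cost is negligible next to the main BSGS bound, to return either ``$N$ is prime'' or a factor.  The main obstacle — and the place that needs the most care — is calibrating the sieve and the BSGS splitting so that, on the one hand, every exponent $e\leq D$ with $\beta^e\equiv 1$ is guaranteed to be detected by some admissible representation $e=js+i$ with the sieved $i$ and $j$, and on the other hand the totals of polynomial construction, Bluestein evaluation, Miller--Rabin descent, and primality fallback, summed over the short family of $\beta$'s, balance out to the claimed $O(D^{1/2}\lg^2 N/(\lg\lg D)^{1/2})$.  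The coverage argument in particular has to rely on $\gcd(s,m)=1$ to guarantee that, as $j$ varies, $i=e-js$ hits every residue class modulo $m$, so that at least one representation uses indices surviving the sieve.
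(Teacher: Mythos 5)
Your overall plan --- a babystep--giantstep order search in which a primorial sieve thins the index sets by a factor of $\Theta(\lg\lg D)$ --- is in the right spirit (the paper itself simply cites \cite{Hit-BSGS} for this lemma), but the proposal has two genuine gaps. The first is the coverage argument, which you correctly flag as the delicate point but then resolve with a mechanism that fails. The order $e=\ord_N(\beta)$ is an arbitrary integer in $[1,D]$ with no a priori congruence restriction, and in the decomposition $e=js+i$ with $0\leq i<s$ the pair $(i,j)$ is \emph{unique}; so if $e\bmod s$ is not coprime to $m$, no admissible representation survives your sieve, and letting ``$i=e-js$ hit every residue class modulo $m$ as $j$ varies'' would force $i$ to range over an interval of length $\gg s$, destroying the savings. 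Two ingredients are missing: (i) one must first replace $\beta$ by $\beta^{E}$ with $E=\prod_{r\leq B}r^{\lfloor\log_r D\rfloor}$, which saturates every prime power $\leq D$ of the primes dividing $m$, so that the new order is \emph{guaranteed} coprime to $m$ (this exponentiation costs only $O(\lg^2 D/\lg\lg D)$ modular multiplications, hence is negligible); and (ii) the stride $s$ must be chosen \emph{divisible} by $m$ --- the opposite of your condition $\gcd(s,m)=1$ --- so that $i\equiv e\pmod m$ and the babystep index automatically inherits the coprimality. Note also that $s,t\asymp\sqrt{D/\lg\lg D}$ gives $st<D$, so your window does not even reach $D$; the correct balance is $s\asymp\sqrt{D\lg\lg D}$ with only the $\phi(m)/m$ fraction of babysteps retained.

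The second gap is the exit strategy when an order $k\leq D$ is found but repeated halving yields no factor. Declaring $\beta$ a ``strong liar'' and eventually falling back on AKS does not meet the specification: for composite $N$, AKS certifies compositeness but produces neither a nontrivial factor nor an element of order exceeding $D$, so the algorithm would terminate with none of the three permitted outputs, and there is no unconditional bound on how many bases must be tried before escaping the liar case. The argument behind the cited result is different: if $\gcd(N,\beta^{k/r}-1)\in\{1,N\}$ for every prime $r\mid k$, then $k$ divides $p-1$ for \emph{every} prime $p\mid N$, and accumulating such divisors over several bases either yields an element of large order or confines $p$ to an arithmetic progression short enough to search directly. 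A smaller point: for collisions modulo $N$ (as opposed to modulo an unknown factor) Lemmas \ref{lem:tree} and \ref{lem:bluestein} are unnecessary --- sort-and-match suffices --- and invoking Lemma \ref{lem:tree} as stated would cost $O(n\lg^3 N)$, exceeding the target bound unless your unproved $O(s\lg^2 N)$ claim for product trees over geometric progressions is substantiated.
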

\begin{proof}
This is exactly \cite[Thm.~6.3]{Hit-BSGS}.
Briefly, the idea is to attempt to compute orders of various
$\alpha \in \Z_N^*$ via the standard babystep-giantstep procedure.
If we fail to compute $\ord_N(\alpha)$,
then the order must be large and we are done.
If we succeed in computing $k \coloneqq \ord_N(\alpha)$,
then we try to find a factor of $N$ via $\gcd(N,\alpha^{k/r}-1)$ for each
prime divisor $r$ of $k$.
If this fails, then we know that $k \mid {p-1}$
for every prime divisor $p$ of $N$.
Repeating this process for various $\alpha$,
we either find an element of large order,
or obtain enough information about the factorisation of $p-1$ for $p\mid N$
to make it feasible to factor $N$ directly.
For details, see \cite{Hit-BSGS}.
\end{proof}

We will need the following well-known results on the distribution of primes.
\begin{lem}
\label{lem:pnt-mertens}
For $B \to \infty$ we have
 \[
  \sum_{\substack{2 \leq r \leq B \\ \text{\rm $r$ prime}}} \log r = (1+o(1)) B,
  \qquad
  \prod_{\substack{2 \leq r \leq B \\ \text{\rm $r$ prime}}} \frac{r-1}{r} =
  \Theta\Big( \frac{1}{\lg B} \Big).
 \]
\end{lem}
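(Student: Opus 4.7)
The plan is to recognise both identities as classical results in prime number theory and simply invoke them. No original argument is needed; the task is just to identify the correct citation and verify that the statement as given follows.

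For the first identity, the left-hand side is precisely the Chebyshev $\theta$-function $\theta(B) := \sum_{p \leq B} \log p$. The asymptotic $\theta(B) = B + o(B)$ as $B \to \infty$ is equivalent to the prime number theorem, and I would cite a standard analytic number theory reference (e.g.\ Apostol's \emph{Introduction to Analytic Number Theory}, or Tenenbaum's \emph{Introduction to Analytic and Probabilistic Number Theory}). This gives the claimed form $(1+o(1))B$ verbatim.

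For the second identity, this is Mertens' third theorem, which states that
\[
  \prod_{p \leq B} \Bigl( 1 - \frac{1}{p} \Bigr) = \frac{e^{-\gamma}}{\log B} + O\!\left(\frac{1}{\log^2 B}\right)
\]
as $B \to \infty$. From this, $\prod_{p \leq B} (r-1)/r = \Theta(1/\log B)$, and since the paper's convention gives $\lg B = \Theta(\log B)$ for $B \geq 2$, we obtain the required $\Theta(1/\lg B)$. Again I would cite Apostol or Tenenbaum, or the original work of Mertens.

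There is no real obstacle: both are textbook results. The only thing to double-check is that the lemma really only needs these asymptotic forms (not sharper quantitative error terms), which it clearly does — the statement uses only $o(1)$ and $\Theta$. If a sharper form were needed downstream, one could invoke PNT with explicit error (e.g.\ Korobov--Vinogradov) or Mertens' theorem with its explicit $O(1/\log^2 B)$ remainder; but for the statement as written, the classical asymptotics suffice directly.
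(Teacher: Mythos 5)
Your proposal is correct and matches the paper's proof, which likewise just cites the Prime Number Theorem (in the form $\theta(B)=(1+o(1))B$) and Mertens' theorem from a standard reference (Montgomery--Vaughan, Theorems 6.9 and 2.7(e)). The choice of textbook is immaterial; no further argument is needed.
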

\begin{proof}
See Theorem 6.9 and Theorem 2.7(e) in \cite{MV-mult-nt}.
The first statement is a form of the Prime Number Theorem,
and the second statement is one of Mertens' theorems.
\end{proof}

Finally we recall that a list of $n \geq 1$ elements of bit size $\beta \geq 1$
may be sorted using the ``merge sort'' algorithm
in time $O(n \beta \lg n)$ \cite{Knu-TAOCP3}.
Here we assume that the elements belong to some totally ordered set,
and that we may compare a pair of elements in time $O(\beta)$.

\section{A variant of Lehman's result}

The crux of the algorithms of \cite{Hit-timespace} and \cite{Har-onefifth}
is the observation, going back to Lehman \cite{Leh-factoring},
that if $N = pq$ is semiprime,
then there exist ``small'' integers $a$ and $b$ such that $aq + bp$ is
``close'' to $(4abN)^{1/2}$
(see \cite[Lem.~3.3]{Har-onefifth},
or the main theorem of \cite{Leh-factoring}).
In this section we prove a variant of this result in which we impose an
additional constraint on $a$ and $b$ of the form $aq - bp = 0 \pmod m$,
where $m$ is a positive integer.
(In Section \ref{sec:main} we will specialise to the case that
$m$ is a product of small primes.)
Moreover, instead of just an existence statement,
we actually want to \emph{compute} the desired $a$ and $b$.

We recall some basic facts about lattices in $\R^2$.
By a \emph{lattice} we mean a subgroup of $\R^2$ generated by
two linearly independent vectors $u, v \in \R^2$,
i.e., the subgroup
$\langle u, v \rangle \coloneqq \{r u + s v: r, s \in \Z\} \subseteq \R^2$.
The \emph{determinant} of a lattice $L$ is the area of
the fundamental parallelogram associated to any basis for $L$.
In particular for $L = \langle u, v \rangle$ we have
\[
\det L = \left| \det \begin{pmatrix} u_1 & v_1 \\ u_2 & v_2 \end{pmatrix} \right|.
\]
For a vector $u \in \R^2$ we write $\norm{u} \coloneqq (u_1^2 + u_2^2)^{1/2}$
for the usual Euclidean norm.

\begin{lem}
\label{lem:minkowski}
Let $B \geq 2$, 
and suppose we are given as input
linearly independent vectors $u, v \in \Z^2$
defining a lattice $L = \langle u, v \rangle$,
such that $\norm{u}, \norm{v} \leq B$.
Then in time $O(\log^3 B)$ we may find a nonzero vector $w \in L$
such that $\norm{w} \leq 2(\det L)^{1/2}$.
\end{lem}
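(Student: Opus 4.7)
The plan is to run the two-dimensional Gauss (or Lagrange) lattice reduction algorithm, which is the direct analogue of the Euclidean algorithm for pairs of vectors in $\R^2$. Concretely, I would repeat the following step until it no longer changes the basis: swap $u$ and $v$ if necessary so that $\norm{u} \leq \norm{v}$; compute the integer $k$ nearest to $\langle u,v\rangle/\langle u,u\rangle$; and replace $v$ by $v - ku$. When the process terminates with $\norm{u}\leq\norm{v}$ and $|\langle u,v\rangle|\leq\norm{u}^2/2$, I would output $u$.

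For correctness I would argue as follows. At termination the basis $(u,v)$ is Gauss-reduced, so if $\theta$ denotes the angle between $u$ and $v$ then $|\cos\theta|\leq\tfrac12$ and hence $\sin\theta\geq\sqrt{3}/2$. Using $\det L = \norm{u}\,\norm{v}\sin\theta \geq \norm{u}^2\sin\theta$ and the resulting inequality $\norm{u}^2 \leq (2/\sqrt3)\det L$, I obtain $\norm{u}\leq (4/3)^{1/4}(\det L)^{1/2}$, which is comfortably below $2(\det L)^{1/2}$. The output vector is nonzero because it is part of a basis of $L$.

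For the complexity I would show two things. First, the entries of the running vectors never exceed $B$ in absolute value, because each iteration replaces the longer of the two basis vectors by a strictly shorter one (after the first swap), so all integer arithmetic is on numbers with $O(\log B)$ bits. Each iteration requires a constant number of additions, multiplications, inner products, and one integer division to find~$k$; using the bounds from Section~\ref{sec:prelim}, each such step costs $O(\MM(\log B))=O(\log B\,\log\log B)$, which is $O(\log^2 B)$. Second, the number of iterations is $O(\log B)$: an analysis exactly parallel to the Fibonacci-style bound for the Euclidean algorithm shows that the shorter of $\norm{u}$, $\norm{v}$ decreases by at least a constant factor (e.g.\ $2/\sqrt{3}$) every two iterations, until the terminating condition holds. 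Multiplying gives the claimed bound of $O(\log^3 B)$.

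The main technical point to pin down is the iteration-count bound, since a purely geometric decrement per step is not quite immediate: one must observe that after the reduction step $v \mapsto v - ku$ either the algorithm terminates or the new $\norm{v}$ is strictly smaller than the old $\norm{u}$ by a definite factor (coming from $|\cos\theta|>\tfrac12$ just before termination fails), which is then used as the new ``short'' vector in the subsequent swap. Everything else is routine bookkeeping.
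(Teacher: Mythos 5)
Your proposal is correct and follows essentially the same route as the paper: run Lagrange--Gauss reduction on the given basis and observe that the resulting shortest vector satisfies $\norm{w}^2 \leq (2/\sqrt{3})\det L$, which is the Hermite bound the paper invokes, with the $O(\log^3 B)$ runtime coming from $O(\log B)$ iterations on $O(\log B)$-bit integers. The only difference is that you spell out the reduction algorithm and its analysis explicitly, whereas the paper delegates both to a citation of Galbraith's textbook.
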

\begin{proof}
We may find a nonzero vector $w \in L$ of minimal norm by applying the
classical \emph{Lagrange--Gauss reduction algorithm} to the input basis
(see \cite[Chap.~17]{Gal-cryptomath}).
According to \cite[Thm.~17.1.10]{Gal-cryptomath},
this runs in time $O(\log^3 B)$.
(The reduction algorithm is essentially a special case of LLL:
the basic idea is to repeatedly subtract a suitable integer multiple
of the shorter vector from the longer vector,
until no further progress can be made in reducing the norms.)
This vector satisfies $\norm{w}^2 \leq \gamma_2 \det L$
where $\gamma_2 = 2/\sqrt{3}$ is the Hermite constant for
rank-$2$ lattices (see \cite[Defn.~16.2.7]{Gal-cryptomath}).
Since $\gamma_2^{1/2} = 1.074\ldots < 2$ we are done.
\end{proof}

\begin{rem}
Lemma \ref{lem:minkowski} can be improved slightly.
Minkowski's convex body theorem \cite[\S{}V.3, Thm.~3]{Lan-ANT}
implies that there exists a nonzero vector $w \in L$ such that
$|w_1|, |w_2| \leq (\det L)^{1/2}$,
and such a vector can be found efficiently by a more involved algorithm;
see for example \cite{KS-reduction}.
This leads to better constants later in the paper,
but does not affect our main asymptotic results.
\end{rem}

\begin{prop}
\label{prop:comppairs}
There exists an algorithm with the following properties.

It takes as input positive integers $N \geq 2$, $m_0$ and $\sigma_0$,
a positive integer $m$ coprime to $N$,
and an integer $\sigma$ coprime to $m$ such that $1 \leq \sigma \leq m$.

Its output is a pair of integers $(a,b) \neq (0,0)$,
such that if $N = pq$ is a semiprime satisfying
\begin{equation}
\label{eq:input-bound}
\sigma_0 \leq p < \left(1 + \frac{1}{m_0}\right) \sigma_0
\end{equation}
and
\begin{equation}
\label{eq:input-congruence}
p \equiv \sigma \pmod m,
\end{equation}
then the linear combination $aq + bp$ satisfies
\begin{align}
\label{eq:bound}
    \left|aq+bp-\left(a\frac{N}{\sigma_0}+b\sigma_0\right)\right|
       \leq \frac{4 N^{1/2}m^{1/2}}{m_0^{3/2}}
\end{align}
and
\begin{align}
\label{eq:congruence}
    aq+bp\equiv a \frac{N}{\sigma} + b\sigma \pmod{m^2}.
\end{align}
Assuming that $m_0, \sigma_0, m = O(N)$,
the algorithm runs in $O(\lg^3 N)$ bit operations,
and moreover the output satisfies $|a|,|b| = O(N^2)$.
\end{prop}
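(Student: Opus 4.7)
Plan. First I will recast the congruence (\ref{eq:congruence}) as a linear constraint on $(a,b)$ that depends only on the inputs. Let $\tau$ be the reduction of $N\sigma^{-1}$ modulo $m^2$ (well-defined since $\gcd(\sigma,m)=1$), and let $c \in \Z$ satisfy $c \equiv -\tau\sigma^{-1} \pmod m$. Under the hypotheses, write $p = \sigma + m\nu$ and $q = \tau + m\mu$ for integers $\mu,\nu$ (the existence of $\mu$ uses $\sigma\tau \equiv N \equiv pq \pmod m$, hence $q \equiv \tau \pmod m$). Expanding $pq = N$ and using $\sigma\tau \equiv N \pmod{m^2}$ gives $\sigma\mu + \tau\nu \equiv 0 \pmod m$, so $\mu \equiv c\nu \pmod m$. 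Therefore
\[
 a(q-\tau)+b(p-\sigma) = m(a\mu+b\nu) \equiv m(ac+b)\nu \pmod{m^2},
\]
and (\ref{eq:congruence}) is implied by $(a,b)$ lying in the sublattice $\Lambda \coloneqq \{(a,b)\in\Z^2 : ac+b \equiv 0 \pmod m\}$, which has index $m$ in $\Z^2$ and basis $(1,-c),\,(0,m)$.

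Next I will encode (\ref{eq:congruence}) and the target bound (\ref{eq:bound}) into a single short-vector problem. Apply the linear map $\phi \colon (a,b) \mapsto (Na,\; m_0(b\sigma_0^2 - aN))$ to the basis of $\Lambda$, obtaining a lattice $L \coloneqq \phi(\Lambda) \subseteq \Z^2$ with basis
\[
 u = (N,\,-m_0(c\sigma_0^2+N)), \qquad v = (0,\,m_0 m \sigma_0^2),
\]
and $\det L = N m m_0 \sigma_0^2$. Under the assumption $m_0,\sigma_0,m = O(N)$ the entries of $u,v$ have size $O(N^4)$, so Lemma \ref{lem:minkowski} produces a nonzero $w = (w_1,w_2) \in L$ with $\norm{w} \leq 2(\det L)^{1/2} = 2\sigma_0(Nmm_0)^{1/2}$ in $O(\lg^3 N)$ bit operations. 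Recover $a = w_1/N \in \Z$ and $b = (w_2/m_0 + aN)/\sigma_0^2 \in \Z$; since $w \neq 0$ we have $(a,b) \neq (0,0)$, and $(a,b)\in\Lambda$ by construction, which yields (\ref{eq:congruence}).

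Finally I will verify (\ref{eq:bound}). Substituting $q = N/p$ gives the identity
\[
 aq+bp - \Bigl(a\tfrac{N}{\sigma_0} + b\sigma_0\Bigr) = (p-\sigma_0)\Bigl(b - \tfrac{aN}{p\sigma_0}\Bigr),
\]
and the hypothesis $\sigma_0 \leq p < \sigma_0 + \sigma_0/m_0$ lets me rewrite the second factor as $(b - aN/\sigma_0^2) + aN(\sigma_0-p)/(p\sigma_0^2)$ and control each term separately. A direct computation then yields
\[
 \Bigl|aq+bp - \Bigl(a\tfrac{N}{\sigma_0}+b\sigma_0\Bigr)\Bigr| \leq \frac{|b\sigma_0^2-aN|}{\sigma_0 m_0} + \frac{|a|N}{\sigma_0 m_0^2} = \frac{|w_1|+|w_2|}{\sigma_0 m_0^2} \leq \frac{2\norm{w}}{\sigma_0 m_0^2} \leq \frac{4N^{1/2}m^{1/2}}{m_0^{3/2}}.
\]
The output size is $|a| \leq \norm{w}/N = O(N^{3/2})$ and, using $\sigma_0 \geq 1$, also $|b| \leq 2\norm{w}/\sigma_0^2 = O(N^{3/2})$, both within the claimed $O(N^2)$.

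The main obstacle is pinning down the right weighting in $\phi$: the factor $N$ on the first coordinate and $m_0$ on the second are dictated precisely by the two terms appearing in the error decomposition above, and only with this weighting does Minkowski's bound $\norm{w} \leq 2(\det L)^{1/2}$ collapse onto exactly the target bound in (\ref{eq:bound}).
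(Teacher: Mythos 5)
Your proposal is correct and follows essentially the same route as the paper: the congruence is encoded as the index-$m$ sublattice $b \equiv N\sigma^{-2}a \pmod m$, the same weighted change of variables $(a,b)\mapsto(Na,\,m_0(b\sigma_0^2-aN))$ turns the target region into a square of the right size relative to $\det L = Nmm_0\sigma_0^2$, and Lemma \ref{lem:minkowski} finishes the job. The only (cosmetic) differences are that you derive the congruence via the explicit decomposition $p=\sigma+m\nu$, $q=\tau+m\mu$ and the error bound via an exact algebraic identity, where the paper uses $(1+t)^{-1}$ expansions; both yield the same estimates.
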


\begin{proof}
Assume that the input parameters
$N$, $m_0$, $\sigma_0$, $m$ and $\sigma$
are as described above.
Assume also that $N = pq$ is a semiprime satisfying
\eqref{eq:input-bound} and \eqref{eq:input-congruence}.

Let $t_0 \coloneqq p / \sigma_0 - 1$ 
so that $p = \sigma_0(1 + t_0)$.
Then $0 \leq t_0 < 1/m_0 \leq 1$, so
\[
q=\frac{N}{p}=\frac{N}{\sigma_0}(1+t_0)^{-1}=\frac{N}{\sigma_0}(1-t_0+\delta t_0^2)
\]
for some $\delta \in [0, 1)$.
For any pair of integers $(a,b)$, it then follows that
\begin{equation}\label{eq:rep}
aq+bp=\left(a\frac{N}{\sigma_0}+b\sigma_0\right)+t_0\left(-a\frac{N}{\sigma_0}+b\sigma_0\right)+t_0^2\left(\delta a\frac{N}{\sigma_0}\right).
\end{equation}
Similarly, let $t \coloneqq p/\sigma - 1$, so that $p = \sigma(1+t)$.
Then $t$ is a rational number that is $m$-integral
(i.e., its denominator is coprime to $m$), and $t \equiv 0 \pmod m$.
Thus
\[
q=\frac{N}{p}=\frac{N}{\sigma}(1+t)^{-1}\equiv \frac{N}{\sigma}(1-t) \pmod{m^2},
\]
and for any pair of integers $(a,b)$ we obtain
\[
aq+bp\equiv\left(a \frac{N}{\sigma} + b\sigma\right)+ t \left(-a\frac{N}{\sigma} +b\sigma\right) \pmod{m^2}.
\]

This last congruence implies that \eqref{eq:congruence} holds
for any $(a,b)$ that satisfies
\begin{equation}
\label{eq:lattice-congruence}
-a\frac{N}{\sigma} + b\sigma \equiv 0 \pmod m.
\end{equation}
If $(a,b)$ additionally satisfies the inequalities
\begin{equation}
\label{eq:lattice-bound}
\left|-a\frac{N}{\sigma_0}+b\sigma_0\right|\leq \frac{2 N^{1/2} m^{1/2}}{m_0^{1/2}},
\qquad
\left|a\frac{N}{\sigma_0}\right|\leq 2 N^{1/2} m^{1/2} m_0^{1/2},
\end{equation}
then \eqref{eq:rep} implies that \eqref{eq:bound} holds.
We are left with showing how to compute a pair $(a,b)\neq (0,0)$ satisfying
both \eqref{eq:lattice-congruence} and \eqref{eq:lattice-bound}.
(The point is that both of these conditions are independent of $p$.)

The congruence \eqref{eq:lattice-congruence} is equivalent to
$b \equiv \gamma a \pmod m$,
where $\gamma$ is the unique integer in $[0, m)$ congruent to
$N/\sigma^2$ modulo $m$.
Thus the solutions $(a,b) \in \Z^2$ to \eqref{eq:lattice-congruence}
form a lattice $L = \langle u, v \rangle$,
where $u \coloneqq (1, \gamma)$ and $v \coloneqq (0, m)$.
Our goal is to find a nonzero vector $(a,b) \in L$ that lies in the parallelogram
defined by \eqref{eq:lattice-bound}.

To achieve this, it is convenient to introduce the linear change of variables
\[
c \coloneqq N a, \qquad d \coloneqq (-N m_0) a + (m_0 \sigma_0^2) b.
\]
In the $(c,d)$-coordinates
the inequalities \eqref{eq:lattice-bound} become simply
\begin{equation}
\label{eq:lattice-bound-cd}
 |c|, |d| \leq 2 N^{1/2} m^{1/2} m_0^{1/2} \sigma_0,
\end{equation}
i.e., the parallelogram becomes a square.
Moreover, in the $(c,d)$-coordinates the lattice $L$ gets mapped to
the lattice $L' \coloneqq \langle u', v' \rangle$ where
\[
u' \coloneqq \begin{pmatrix} N \\ -N m_0 + m_0 \sigma_0^2 \gamma \end{pmatrix},
\qquad
v' \coloneqq \begin{pmatrix} 0 \\ m_0 \sigma_0^2 m \end{pmatrix}.
\]
The determinant of $L'$ is $N m m_0 \sigma_0^2$.
We may therefore apply Lemma \ref{lem:minkowski} to the basis $u', v'$
to find a nonzero pair $(c,d) \in L'$ satisfying \eqref{eq:lattice-bound-cd}.

The hypothesis $m_0, \sigma_0, m = O(N)$ implies that
$\norm{u'}, \norm{v'} = O(N^4)$,
so the cost of applying Lemma \ref{lem:minkowski} is $O(\lg^3 N)$.
The cost of the remaining arithmetic (computing $\gamma$, $u'$ and $v'$,
and recovering $(a,b)$ from $(c,d)$)
is bounded by $(\lg N)^{1+o(1)}$.
Finally, \eqref{eq:lattice-bound} implies that
$|a| \leq 2N^{-1/2} m^{1/2} m_0^{1/2} = O(N^{1/2})$
and $|b| \leq 2N^{1/2} m^{1/2} + |a| N = O(N^{3/2})$.
\end{proof}

\begin{rem}
The special case $m = 1$ of Proposition \ref{prop:comppairs} is closely
related to \cite[Lem.~3.3]{Har-onefifth}
and the main theorem of \cite{Leh-factoring}
(note that our parameter $m_0$ roughly corresponds to $r$ in those statements),
and in fact it is possible to prove those results using
a similar lattice-based approach.
\end{rem}

\section{The main algorithm}
\label{sec:main}

For the convenience of the reader,
we recall the following algorithm from \cite{Har-onefifth},
which forms a key subroutine of the main search algorithm presented afterwards.
\begin{algorithm}[Finding collisions modulo $p$ or $q$]\ \\   
\label{alg:collisions}
\textit{Input:}
\begin{inputoutputlist}
\item A positive integer $N$, either prime or semiprime.
\item A positive integer $\kappa$, and an element $\alpha\in\Z_N^*$
such that $\ord_N(\alpha) \geq \kappa$.
\item Elements $v_1\ldots,v_n\in\Z_N$ for some positive integer $n$,
such that $v_h\neq \alpha^i$ for all $h\in\{1,\ldots,n\}$ and $i\in\{0,\ldots,\kappa-1\}$.
\end{inputoutputlist}
\textit{Output:}
\begin{inputoutputlist}
\item If $N=pq$ is semiprime, and if there exists $h\in\{1,\ldots,n\}$ such that
$v_h\equiv \alpha^i \pmod p$ or $v_h\equiv \alpha^i \pmod q$ for some
$i\in\{0,\ldots,\kappa-1\}$, returns $p$ and $q$.
\item Otherwise returns ``no factors found''.
\end{inputoutputlist}
\begin{algorithmic}[1]
\State Using Lemma \ref{lem:tree} (product tree), compute the polynomial
\[
f(x) \coloneqq (x-v_1)\cdots (x-v_n)\in\Z_N[x].
\]
\State Using Lemma \ref{lem:bluestein} (Bluestein's algorithm), compute the values $f(\alpha^i)\in\Z_N$ for $i=0,\ldots,\kappa-1$.
\For{$i=0,\ldots,\kappa-1$}
\State Compute $\gamma_i \coloneqq \gcd(N,f(\alpha^i))$.
\If {$\gamma_i\notin\{1,N\}$} recover $p$ and $q$ and return.
\EndIf
\If{$\gamma_i=N$}
\For{$h=1,\ldots,n$}
\If {$\gcd(N,v_h-\alpha^i)\neq 1$} recover $p$ and $q$ and return.
\EndIf
\EndFor
\EndIf
\EndFor
\State Return ``no factors found''.
\end{algorithmic}
\end{algorithm}

\begin{prop}
Algorithm \ref{alg:collisions} is correct.
Assuming that $\kappa,n=O(N)$, its running time is
$O(n\lg^3 N + \kappa\lg^2 N)$.
\end{prop}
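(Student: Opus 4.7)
The plan is to handle correctness and runtime separately, splitting the correctness argument by whether $N$ is prime or $N=pq$ is semiprime.

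For correctness when $N$ is prime, every integer is either coprime to $N$ or divisible by $N$, so each $\gamma_i$ and each inner-loop gcd lies in $\{1,N\}$; the hypothesis $v_h\not\equiv\alpha^i\pmod N$ rules out the value $N$ inside the inner loop, so the algorithm correctly returns ``no factors found''. Suppose instead that $N=pq$. I would observe that $\gamma_i = \gcd(N, f(\alpha^i))\in\{1,p,q,N\}$; values in $\{p,q\}$ are caught and handled at the \textbf{if} in step 5. The case $\gamma_i=N$ means $p\mid f(\alpha^i)$ and $q\mid f(\alpha^i)$, so, by primality of $p$ and $q$, there exist indices $h,h'$ with $p\mid v_h-\alpha^i$ and $q\mid v_{h'}-\alpha^i$; for any such index, the hypothesis $v_h\not\equiv\alpha^i\pmod N$ forces $\gcd(N,v_h-\alpha^i)\in\{p,q\}$, so the inner loop finds a nontrivial factor and returns. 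Conversely, if there is any $(h,i)$-collision modulo $p$ or $q$ with $i\in\{0,\dots,\kappa-1\}$, then the corresponding prime divides $f(\alpha^i)$, forcing $\gamma_i\in\{p,q,N\}$, and one of the two branches produces the factorisation.

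For the runtime, I would bound each step via the cited lemmas. Line 1 costs $O(n\lg^3 N)$ by Lemma \ref{lem:tree}, and line 2 costs $O((n+\kappa)\lg^2 N)$ by Lemma \ref{lem:bluestein}. Each gcd in the body of the outer loop involves two integers of bit size $O(\lg N)$, so by the cost estimate for the extended GCD recalled in Section \ref{sec:prelim}, it takes $O(\lg N(\lg\lg N)^2)$ time; summed over the $\kappa$ iterations this contributes $O(\kappa\lg^2 N)$.

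The main subtlety is the inner loop: naively one might fear a multiplicative blowup of order $\kappa n$ in the gcd cost. However, the correctness analysis above shows that whenever $\gamma_i = N$ the inner loop is guaranteed to return inside that single entry, so the inner loop is entered at most once across the entire execution and contributes only $O(n\lg^2 N)$ in total, which is absorbed into the $O(n\lg^3 N)$ term from line 1. Summing yields the advertised bound $O(n\lg^3 N+\kappa\lg^2 N)$.
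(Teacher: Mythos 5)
Your argument is essentially the proof that the paper delegates to \cite[Prop.~4.1]{Har-onefifth}: correctness split on whether $N$ is prime or semiprime, with the inner loop's guaranteed early return being the key to avoiding the $\kappa n$ blow-up. Both directions of the semiprime case (soundness of the returned factors, and completeness when a collision modulo $p$ or $q$ exists) are handled correctly, and the per-step cost accounting matches the cited lemmas.

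There is one loose end in the runtime analysis. Your bound on the inner loop rests on the claim that \emph{whenever} $\gamma_i = N$ the inner loop returns during that single entry. Your own prime-case analysis does not support this: there you only establish $\gamma_i \in \{1,N\}$ and observe that the inner-loop gcds all equal $1$, so if $\gamma_i = N$ could occur for a prime $N$, the inner loop would be entered \emph{without} returning, potentially for many values of $i$, giving a cost of order $\kappa n \lg^2 N$. What closes the gap is the observation that for prime $N$ the case $\gamma_i = N$ is impossible: $\gamma_i = N$ means $f(\alpha^i) = \prod_h (\alpha^i - v_h) = 0$ in the field $\Z_N$, hence $v_h = \alpha^i$ in $\Z_N$ for some $h$, contradicting the input hypothesis. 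With that one sentence added, the inner loop is entered at most once over the whole execution (never for prime $N$; at most once, with guaranteed return, for semiprime $N$), and your bound follows. A final cosmetic point: inside the single entry of the inner loop one also needs the value $\alpha^i \bmod N$ itself (Step 2 only produces $f(\alpha^i)$), but computing it once by repeated squaring costs $O(\lg^2 N \lg\lg N)$ and is absorbed.
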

\begin{proof}
This is exactly Proposition 4.1 in \cite{Har-onefifth}.
\end{proof}

We now present the main search algorithm.    

\newcommand\hack{\hskip\algorithmicindent}

\begin{algorithm}[The main search]\ \\   
\label{alg:search}
\textit{Input:}
\begin{inputoutputlist}
\item A positive integer $N \geq 2$, either prime or semiprime.
\item Positive integers $m_0$ and $m$ such that $\gcd(m, N) = 1$.
\item An element $\beta\in\Z_N^*$ such that $\ord_N(\beta^{m^2}) \geq 2\lambda + 1$ where
\[
\lambda \coloneqq \left\lceil \frac{4 N^{1/2}}{(m \cdot m_0)^{3/2}}\right\rceil.
\]
\end{inputoutputlist}
\textit{Output:}
If $N = pq$ is semiprime, returns $p$ and $q$.
Otherwise returns ``$N$ is prime''.
\begin{algorithmic}[1]
\For{$i=0,\ldots,2\lambda$} \label{step:babystep-loop} \Comment{Computation of babysteps}
\State Compute $\beta^{m^2i} \pmod N$. \label{step:babystep}
\If{$\gcd(N,\beta^{m^2i}-1) \notin \{1, N\}$} recover $p$ and $q$ and return.\label{step:babystep-gcd}
\EndIf
\EndFor
\For{$\sigma=1, \ldots, m$} \Comment{Computation of giantsteps}
    \If{$\gcd(\sigma,m) = 1$} \label{step:sigma-gcd}
        \State Initialise $\sigma_0 \coloneqq 1$. \label{step:sigma0-init}
        \While{$\sigma_0 < N^{1/2}$} \label{step:sigma0-loop}
            \State \label{step:get-ab}%
            Apply Proposition \ref{prop:comppairs} with input $N$, $m_0$, $\sigma_0$, $m$ and $\sigma$,
            \par\hack\hack\hack to obtain a pair $(a,b) = (a_{\sigma,\sigma_0}, b_{\sigma,\sigma_0})$.
            \State \label{step:tau-j}
            Compute
            \[ j_{\sigma,\sigma_0} \coloneqq m^2(\tau_0 - \lambda) + \tau, \]
            \hack\hack\hack where
            \[ \tau_0 \coloneqq \left\lfloor \left( a \frac{N}{\sigma_0} + b \sigma_0 \right) \Big{/}m^{2} \right\rfloor, \]
            \hack\hack\hack and where $\tau$ is the unique integer such that
            \[ \tau \equiv a \frac{N}{\sigma} + b\sigma \pmod{m^2}, \qquad 0 \leq \tau < m^2. \]
            \State\label{step:giantstep}%
            Compute
            \[  v_{\sigma,\sigma_0} \coloneqq \beta^{aN+b-j_{\sigma,\sigma_0}} \pmod N.  \]
            \State\label{step:update}%
            Update $\sigma_0 \coloneqq \lceil (1 + 1/m_0)\sigma_0 \rceil$.
        \EndWhile
    \EndIf
\EndFor
\State \label{step:match}%
Applying a sort-and-match algorithm to the babysteps and giantsteps
computed in Steps \ref{step:babystep} and \ref{step:giantstep},
find all $i \in \{0, \ldots, 2\lambda\}$
and all pairs $(\sigma, \sigma_0)$ such that
\begin{equation}
\label{eq:modn}
\beta^{m^2 i} \equiv v_{\sigma,\sigma_0} \pmod N.
\end{equation}
For each such match, apply Lemma \ref{lem:ccheck}
to the candidate
\begin{equation}
\label{eq:u-defn}
u \coloneqq m^2 i + j_{\sigma,\sigma_0},
\end{equation}
together with $a \coloneqq a_{\sigma,\sigma_0}$
and $b \coloneqq b_{\sigma,\sigma_0}$.
If $p$ and $q$ are found, return.
\State \label{step:collisions}%
Let $v_{1},\ldots, v_{n}$ be the list of giantsteps $v_{\sigma,\sigma_0}$
computed in Step \ref{step:giantstep},
skipping those that were discovered in Step \ref{step:match}
to be equal to one of the babysteps.
Apply Algorithm \ref{alg:collisions} (finding collisions)
with $N$, $\kappa \coloneqq 2\lambda + 1$,
$\alpha \coloneqq \beta^{m^2} \pmod N$ and $v_{1},\ldots, v_{n}$ as inputs.
If Algorithm \ref{alg:collisions} succeeds, return $p$ and $q$.
\State \label{step:prime}%
Return ``$N$ is prime''.
\end{algorithmic}
\end{algorithm}

\begin{prop}
\label{prop:search}
Algorithm \ref{alg:search} is correct.
If $m, m_0 = O(N)$, then it runs in time
\begin{equation}
\label{eq:search}
O\left(m \lg N (\lg \lg N)^2 + \phi(m)m_0\lg^4 N + \frac{N^{1/2} \lg^2 N}{(m\cdot m_0)^{3/2}}\right).
\end{equation}
\end{prop}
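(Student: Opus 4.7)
The plan is to prove correctness and runtime separately. For correctness, note first that every branch returning a factor relies on a nontrivial gcd with $N$ or on Lemma~\ref{lem:ccheck}, so when $N$ is prime the algorithm cannot falsely report a factor and must terminate at Step~\ref{step:prime}. It therefore suffices to show that if $N = pq$ is a semiprime then some branch recovers $p$ and $q$.

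The central claim is that there exist loop indices $\sigma, \sigma_0, i$ with $\beta^{m^2 i} \equiv v_{\sigma,\sigma_0} \pmod p$. I take $\sigma := p \bmod m$: since $\gcd(m, N) = 1$ we have $p \nmid m$, so $\sigma \in \{1,\ldots,m-1\}$ is coprime to $m$ and the outer loop considers it. A short check shows the inner loop covers $p$ in the sense that some iterated value $\sigma_0$ satisfies $\sigma_0 \le p < (1 + 1/m_0)\sigma_0$: if $\sigma_0' = \lceil(1+1/m_0)\sigma_0\rceil$ is the next value, then $\sigma_0' - 1 < (1+1/m_0)\sigma_0$, so no integer in $[\sigma_0, \sigma_0')$ is skipped. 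Applying Proposition~\ref{prop:comppairs} to $N, m_0, \sigma_0, m, \sigma$ then yields a pair $(a,b)$ such that $u_0 := aq + bp$ satisfies the bound~\eqref{eq:bound} and congruence~\eqref{eq:congruence}.

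I then set $i := (u_0 - j_{\sigma,\sigma_0})/m^2$. By~\eqref{eq:congruence} and the definition of $\tau$ we have $u_0 \equiv \tau \pmod{m^2}$, so $i$ is an integer. Writing $X := aN/\sigma_0 + b\sigma_0$, the definitions of $\tau_0$ and $\tau$ give $|(\tau + m^2\tau_0) - X| < m^2$, and combining with~\eqref{eq:bound} produces
\[
m^2 |i - \lambda| = |u_0 - \tau - m^2\tau_0| < \frac{4 N^{1/2} m^{1/2}}{m_0^{3/2}} + m^2,
\]
so $|i - \lambda| < \lambda + 1$ and, $i - \lambda$ being an integer, $i \in [0, 2\lambda]$. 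Fermat's little theorem now gives $\beta^{u_0} \equiv \beta^{aN+b} \pmod p$ and hence $\beta^{m^2 i} \equiv v_{\sigma,\sigma_0} \pmod p$. If this collision also holds modulo $N$, then Step~\ref{step:match} finds it and Lemma~\ref{lem:ccheck} applied to $u = u_0 = aq + bp$ returns $p$ and $q$; otherwise $v_{\sigma,\sigma_0}$ is not removed and is passed to Algorithm~\ref{alg:collisions} in Step~\ref{step:collisions}, whose hypotheses are ensured by the input condition $\ord_N(\beta^{m^2}) \ge 2\lambda + 1 = \kappa$ and which recovers $p, q$ from the mod-$p$ collision.

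For the runtime I account for the contributions by loop. The babystep loop performs $O(\lambda)$ modular multiplications and gcds, costing $O(\lambda \lg N (\lg\lg N)^2) = O(N^{1/2} \lg^2 N / (m m_0)^{3/2})$. The outer $\sigma$ loop performs $m$ coprimality tests at $O(\lg N (\lg\lg N)^2)$ each, giving $O(m \lg N (\lg\lg N)^2)$. The inner $\sigma_0$ loop runs $O(m_0 \lg N)$ times for each of the $\phi(m)$ coprime values of $\sigma$; each iteration is dominated by the $O(\lg^3 N)$ cost of Proposition~\ref{prop:comppairs}, yielding $O(\phi(m) m_0 \lg^4 N)$ overall. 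Sort-and-match and its Lemma~\ref{lem:ccheck} follow-ups are absorbed into these terms, and Algorithm~\ref{alg:collisions}, invoked with $n = O(\phi(m) m_0 \lg N)$ and $\kappa = 2\lambda + 1$, adds only the same two terms by its stated complexity. Summing gives~\eqref{eq:search}. The main obstacle is the index bookkeeping in the correctness argument: one must arrange the floors defining $\tau_0, \tau, j_{\sigma,\sigma_0}$ so that, after combining with the error bound~\eqref{eq:bound}, the integer $i$ lands in $[0, 2\lambda]$ without an off-by-one.
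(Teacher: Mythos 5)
Your overall structure---locating the unique pair $(\sigma,\sigma_0)$ that captures $p$, invoking Proposition~\ref{prop:comppairs}, verifying via the $\tau,\tau_0,\lambda$ bookkeeping that the resulting index $i$ lands in $[0,2\lambda]$, and then splitting into ``match found mod $N$'' versus ``survives to Algorithm~\ref{alg:collisions}''---is the same as the paper's, and your index computation is carried out correctly (the inequality $m^2|i-\lambda| < 4N^{1/2}m^{1/2}/m_0^{3/2} + m^2 \leq m^2(\lambda+1)$ is exactly what is needed). The runtime accounting also matches the paper's.

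There is, however, a genuine gap in the final dichotomy of the correctness argument. You claim that if the collision $\beta^{m^2 \bar i}\equiv v_{\bar\sigma,\bar\sigma_0}\pmod N$ fails, then $v_{\bar\sigma,\bar\sigma_0}$ ``is not removed'' before Step~\ref{step:collisions}. But Step~\ref{step:match} removes $v_{\bar\sigma,\bar\sigma_0}$ whenever it equals \emph{any} babystep $\beta^{m^2 i'}$ modulo $N$, not only the one with $i'=\bar i$. In that unexamined case Lemma~\ref{lem:ccheck} would be applied to the wrong candidate $u = m^2 i' + \bar\jmath \neq \bar a q + \bar b p$, would in general fail, and the giantstep would nonetheless be deleted from the list passed to Algorithm~\ref{alg:collisions}, so the factor could be missed. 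Ruling this out requires showing $i'=\bar i$, which follows from $\beta^{m^2 i'}\equiv \beta^{m^2\bar i}\pmod p$ together with $\ord_p(\beta^{m^2})\geq 2\lambda+1$. That order bound \emph{modulo $p$} is not implied by the input hypothesis $\ord_N(\beta^{m^2})\geq 2\lambda+1$ (the order mod $N$ is the lcm of the orders mod $p$ and mod $q$, so either could be small); it is precisely what the gcd tests in Step~\ref{step:babystep-gcd} certify, since if $\ord_p(\beta^{m^2})\leq 2\lambda$ that loop already extracts a nontrivial factor. Your proof never invokes these gcd checks, so this branch of the argument is missing; once it is supplied, the rest goes through.
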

\begin{proof}
We first prove correctness.
Suppose that $N=pq$ is semiprime, with $p < q$.
We must show that in Steps 1--\ref{step:collisions} the algorithm succeeds in
finding $p$ and $q$.

Consider the loop in Steps \ref{step:babystep-loop}--\ref{step:babystep-gcd}.
Since we have assumed that $\ord_N(\beta^{m^2}) \geq 2\lambda + 1$,
in this loop we either find a factor of $N$,
or prove that both $\ord_p(\beta^{m^2}) \geq 2\lambda + 1$
and $\ord_q(\beta^{m^2}) \geq 2\lambda + 1$.
For the remainder of the proof, we will assume the latter.

We next consider the giantsteps.
The block in Steps \ref{step:get-ab}--\ref{step:giantstep}
is executed for various pairs $(\sigma, \sigma_0)$.
We claim that \eqref{eq:input-congruence} holds for exactly one such $\sigma$,
and that \eqref{eq:input-bound} holds for exactly one such $\sigma_0$.
For \eqref{eq:input-congruence} this is clear,
as $p$ is coprime to $m$ by hypothesis,
so there is exactly one $\sigma$ visited by the outer loop such that
$\gcd(\sigma,m) = 1$ and $p \equiv \sigma \pmod m$.
For the inner loop,
observe that $\sigma_0$ strictly increases on each iteration
(in Step \ref{step:update}),
so the loop certainly terminates.
For any $\sigma_0$ visited in the loop,
write $\sigma'_0 \coloneqq \lceil (1 + 1/m_0) \sigma_0\rceil$
for the value of $\sigma_0$ at the next iteration.
Since $p < N^{1/2}$,
on some iteration we must have $\sigma_0 \leq p < \sigma'_0$;
in fact, this occurs for precisely one value of $\sigma_0$
because the successive intervals $[\sigma_0, \sigma'_0)$ are disjoint.
Moreover, since $p$ is an integer,
the condition $\sigma_0 \leq p < \sigma'_0$ is equivalent to
$\sigma_0 \leq p < (1 + 1/m_0) \sigma_0$,
i.e., to \eqref{eq:input-bound}.
Therefore \eqref{eq:input-bound} holds for exactly one $\sigma_0$ as claimed.

Let $(\bar\sigma, \bar\sigma_0)$ be the pair for which
\eqref{eq:input-bound} and \eqref{eq:input-congruence} hold,
and consider the corresponding coefficients
$\bar{a} \coloneqq a_{\bar\sigma,\bar\sigma_0}$ and
$\bar{b} \coloneqq b_{\bar\sigma,\bar\sigma_0}$
computed in Step \ref{step:get-ab}.
According to Proposition \ref{prop:comppairs},
the linear combination $\bar{a}q + \bar{b}p$ satisfies
\eqref{eq:bound} and \eqref{eq:congruence}
for $(\bar\sigma, \bar\sigma_0)$.
Let $\tau$ and $\tau_0$ be as defined in Step \ref{step:tau-j}.
From \eqref{eq:congruence} we find that
\begin{equation}
\label{eq:aqbp-decomp}
\bar{a}q + \bar{b}p = m^2 \left\lfloor \frac{\bar{a}q+\bar{b}p}{m^2} \right\rfloor + \tau,
\end{equation}
and \eqref{eq:bound} yields
\[
\frac{\bar{a}N/\bar\sigma_0 + \bar{b}\bar\sigma_0}{m^2} - \frac{4N^{1/2}}{(m \cdot m_0)^{3/2}}
  \leq \frac{\bar{a}q+\bar{b}p}{m^2} \leq
\frac{\bar{a}N/\bar\sigma_0 + \bar{b}\bar\sigma_0}{m^2} + \frac{4N^{1/2}}{(m \cdot m_0)^{3/2}},
\]
so
\[
\tau_0 - \lambda \leq \frac{\bar{a}q+\bar{b}p}{m^2} < (\tau_0 + 1) + \lambda.
\]
This implies that $\tau_0 - \lambda \leq \lfloor (\bar{a}q + \bar{b}p) / m^2 \rfloor \leq \tau_0 + \lambda$,
i.e., that $\lfloor (\bar{a}q + \bar{b}p) / m^2 \rfloor = \tau_0 - \lambda + \bar i$
for some $0 \leq \bar i \leq 2\lambda$.
Inserting this into \eqref{eq:aqbp-decomp}, we find that
\[
\bar{a}q + \bar{b}p = m^2 \bar i + \bar j,
\]
where $\bar j \coloneqq j_{\bar\sigma,\bar\sigma_0}$ is as defined in Step \ref{step:tau-j}.
Now, the congruence $p \equiv 1 \pmod{p-1}$ implies that
$\bar{a}q + \bar{b}p \equiv \bar{a}N + \bar{b} \pmod{p-1}$,
so Fermat's little theorem yields
\[ \beta^{\bar{a}q+\bar{b}p} \equiv \beta^{\bar{a}N+\bar{b}} \pmod p. \]
We conclude that there must be a collision modulo $p$ between
the babysteps computed in Step \ref{step:babystep}
and the giantsteps computed in Step \ref{step:giantstep},
namely
\begin{equation}
\label{eq:modp}
\beta^{m^2\bar{i}} \equiv v_{\bar\sigma, \bar\sigma_0} \pmod p.
\end{equation}

In Step \ref{step:match}, we attempt to find the solution
$(\bar i, \bar\sigma, \bar\sigma_0)$ to \eqref{eq:modp},
by finding all solutions to the stronger congruence \eqref{eq:modn},
which is a congruence modulo $N$ rather than modulo $p$.
Let $(i,\sigma,\sigma_0)$ be one of the solutions found in Step \ref{step:match}.
If it is equal to $(\bar i, \bar\sigma, \bar\sigma_0)$,
then the corresponding $u$ defined in \eqref{eq:u-defn}
will be equal to $m^2 \bar i + \bar j = \bar{a}q + \bar{b}p$,
and the factors $p$ and $q$ will be found via Lemma \ref{lem:ccheck}.
Now suppose instead that $(i,\sigma,\sigma_0)\neq(\bar i, \bar\sigma,\bar\sigma_0)$.
We claim then that $(\sigma,\sigma_0) \neq (\bar\sigma,\bar\sigma_0)$.
Indeed, if $(\sigma,\sigma_0) = (\bar\sigma,\bar\sigma_0)$, then we would have
\[
\beta^{m^2\bar{i}} \equiv v_{\bar\sigma,\bar\sigma_0} = v_{\sigma,\sigma_0} \equiv \beta^{m^2 i} \pmod p,
\]
which implies that $\bar{i}=i$ due to the fact that $0 \leq i, \bar i \leq 2\lambda$
and $\ord_p(\beta^{m^2}) \geq 2\lambda+1$.
This establishes the claim,
and consequently the giantstep $v_{\bar\sigma,\bar\sigma_0}$
remains among the candidates in the list $v_{1},\ldots,v_{n}$
constructed in Step \ref{step:collisions}.

Finally we consider Step \ref{step:collisions}.
The procedure in Step \ref{step:match} ensures that all preconditions for
Algorithm \ref{alg:collisions} are met.
In addition, we have seen in the last paragraph that one of $v_1, \ldots, v_n$
is equal to $v_{\bar\sigma,\bar\sigma_0}$.
Hence, Algorithm \ref{alg:collisions} will succeed in finding
a nontrivial factor of $N$.

If $N$ is prime, then Steps 1--\ref{step:collisions} will fail to find a nontrivial factor,
and the algorithm will correctly return ``$N$ is prime'' in Step \ref{step:prime}.

We now analyse the runtime complexity.
To prepare for the loop in Steps \ref{step:babystep-loop}--\ref{step:babystep-gcd}
we first compute $\beta^{m^2} \pmod N$;
the cost of this step is
\[
O(\lg m \lg N \lg \lg N).
\]
The loop itself computes at most
$\kappa \coloneqq 2\lambda + 1$ products modulo $N$ and
$\kappa$ GCDs of integers bounded by $N$,
whose total cost is
\[
O(\kappa \lg N (\lg \lg N)^2).
\]

In Step \ref{step:sigma-gcd} we compute
$m$ GCDs of integers bounded by $m = O(N)$,
which costs
\[
O(m \lg N (\lg \lg N)^2).
\]

Now consider the inner loop over $\sigma_0$.
The ratio between values of $\sigma_0$ on successive iterations is at least $1 + 1/m_0$,
so for each $\sigma$ the number of iterations of the inner loop over $\sigma_0$ is at most
\[
\left\lceil\frac{\log(N^{1/2})}{\log (1+1/m_0)}\right\rceil = O(m_0 \lg N).
\]
The number of $\sigma$ considered is $\phi(m)$,
so the block in Steps \ref{step:get-ab}--\ref{step:update} executes altogether
\[
O(\phi(m) m_0 \lg N)
\]
times.
Let us estimate the cost of each iteration of this block.
The cost of the invocation of Proposition \ref{prop:comppairs} in Step \ref{step:get-ab}
is $O(\lg^3 N)$.
By hypothesis we have $\sigma, m, m_0 = O(N)$,
certainly $\lambda, \sigma_0 = O(N^{1/2})$,
and Proposition \ref{prop:comppairs} ensures that $a, b = O(N^2)$,
so all quantities appearing in Step \ref{step:tau-j} have $O(\lg N)$ bits.
The cost of computing $\sigma^{-1} \pmod{m^2}$
is therefore $O(\lg N (\lg \lg N)^2)$,
and all other arithmetic operations required to compute
$\tau$, $\tau_0$ and $j_{\sigma,\sigma_0}$
in Step \ref{step:tau-j} cost at most $O(\lg N \lg \lg N)$ bit operations.
Similarly, the exponent $aN + b - j_{\sigma,\sigma_0}$ ($= N^{O(1)}$)
in Step \ref{step:giantstep}
and the updated value of $\sigma_0$ in Step \ref{step:update}
are computed in time $O(\lg N \lg \lg N)$.
Finally, the modular exponentiation in Step \ref{step:giantstep} requires
$O(\lg N)$ multiplications modulo $N$,
plus possibly one inversion modulo $N$ if the exponent is negative,
so its cost is $O((\lg N)^2 \lg \lg N + \lg N (\lg \lg N)^2)$.
We conclude that the block in Steps \ref{step:get-ab}--\ref{step:update}
runs in time $O(\lg^3 N)$, and that the total over all iterations is
\[
O(\phi(m) m_0 \lg^4 N).
\]

In Step \ref{step:match},
we construct a list of pairs $(\beta^{m^2 i},i)$ of length $\kappa$,
and a list of tuples
$(v_{\sigma,\sigma_0}, j_{\sigma,\sigma_0}, a_{\sigma,\sigma_0}, b_{\sigma,\sigma_0})$
of length $O(\phi(m)m_0\lg N)$.
From the bounds already mentioned,
each item in these lists occupies $O(\lg N)$ bits.
We then use merge-sort to sort the lists by the first component of each tuple,
which requires
\[
O((\kappa + \phi(m)m_0\lg N) \lg^2 N)
\]
bit operations.
Each giantstep $v_{\sigma,\sigma_0}$ is equal to
at most one babystep $\beta^{m^2 i}$,
because our assumption $\ord_N(\beta^{m^2}) \geq \kappa$
implies that the babysteps are all distinct.
Matching the two sorted lists, we may hence find all matches in time
\[
O((\kappa + \phi(m)m_0\lg N)\lg N).
\]
Since there are at most $\kappa$ such matches,
the total cost for applying Lemma \ref{lem:ccheck} in Step \ref{step:match}
is bounded by
\[
O(\kappa\lg N\lg\lg N).
\]
Again, we note that the assumptions of Lemma \ref{lem:ccheck} on the
size of the candidates for $a$, $b$ and $u$ are always satisfied.
Finally, in Step \ref{step:collisions} we apply Algorithm \ref{alg:collisions},
whose complexity is
\[
O(\phi(m)m_0\lg^4 N + \kappa\lg^2 N).
\]

Combining the contributions from all steps, we obtain the overall bound
\[
O(m \lg N (\lg \lg N)^2 + \phi(m) m_0 \lg^4 N + \kappa \lg^2 N).
\]
The last term becomes
\[
O\left( \left(\frac{N^{1/2}}{(m \cdot m_0)^{3/2}} + 1\right) \lg^2 N\right) = 
O\left( \frac{N^{1/2} \lg^2 N}{(m \cdot m_0)^{3/2}} \right) + O(\lg^2 N).
\]
The final $\lg^2 N$ term is dominated by $\phi(m) m_0 \lg^4 N$,
completing the proof.
\end{proof}

\begin{rem}
The $\phi(m) m_0 \lg^4 N$ term in Proposition \ref{prop:search}
arises from two sources:
finding short lattice vectors in order to compute $a$ and $b$
(Lemma \ref{lem:minkowski}),
and the collision-finding step (Algorithm \ref{alg:collisions}).
In fact, the collision-finding step is the real bottleneck;
with some effort the $O(\lg^3 N)$ cost of Lemma \ref{lem:minkowski}
can be improved to $(\lg N)^{1+o(1)}$ \cite{NSV-linearLLL}.
\end{rem}

Finally we present the main factoring routine.
In this algorithm, $N_0$ is a constant that is chosen large enough
to ensure that the proof of correctness works for all $N \geq N_0$.
In principle one could work out $N_0$ explicitly, but we will not do so.

\begin{algorithm}[Factoring (semi)primes]\ \\   
\label{alg:factor}
\textit{Input:} A positive integer $N \geq N_0$, either prime or semiprime.

\noindent\textit{Output:}
If $N = pq$ is semiprime, returns $p$ and $q$.
Otherwise returns ``$N$ is prime''.
\begin{algorithmic}[1]
\State\label{step:params}%
Compute 
\[
B \coloneqq \left\lfloor \frac{\lg N}{30} \right\rfloor,
\qquad
m \coloneqq \prod_{\substack{2 \leq r \leq B \\ \text{$r$ prime}}} r,
\]
and
\[
m_0 \coloneqq \left\lceil \left(\frac{N^{1/5}(\lg\lg N)^{2/5}}{(\lg N)^{4/5}}\right)\Big{/}m\right\rceil.
\]
If $\gcd(N,m) \notin \{1, N\}$, recover $p$ and $q$ and return.
\State\label{step:beta}%
Apply Theorem \ref{thm:2018} with $D \coloneqq \lceil N^{2/5}\rceil$.
If any factors of $N$ are found, or if $N$ is proved to be prime, return.
Otherwise, we obtain $\beta\in\Z_N^*$ such that $\ord_N(\beta)>D$.
\State\label{step:search}%
Run Algorithm \ref{alg:search} (the main search)
with the given $N$, $\beta$, $m$ and $m_0$.
Return the factors found, or ``$N$ is prime''.
\end{algorithmic}
\end{algorithm}

\begin{prop}
Algorithm \ref{alg:factor} is correct (for suitable $N_0$),
and it runs in time
\[
O\left(\frac{N^{1/5} \lg^{16/5} N}{(\lg\lg N)^{3/5}}\right).
\]
\end{prop}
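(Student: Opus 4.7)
My plan is to substitute the choices of $B$, $m$, $m_0$, $D$ from Steps \ref{step:params}--\ref{step:beta} into the hypotheses and complexity bound of Proposition \ref{prop:search}, and then verify both the order precondition and that the resulting three-term complexity collapses to the claimed target.

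For correctness, I would first handle Step \ref{step:params}: the first half of Lemma \ref{lem:pnt-mertens} gives $\log m = (1+o(1))B \leq (1+o(1))\lg N/30$, so $m = O(N^{c})$ with $c<1/20$; in particular $m<N$, so a non-unit gcd yields a proper factor and otherwise $\gcd(m,N)=1$. After Step \ref{step:beta} we either return or hold $\beta\in\Z_N^*$ with $\ord_N(\beta) > N^{2/5}$. The only nontrivial remaining hypothesis of Algorithm \ref{alg:search} is $\ord_N(\beta^{m^2}) \geq 2\lambda+1$, which I would deduce from $\ord_N(\beta^{m^2}) \geq \ord_N(\beta)/m^2 > N^{2/5}/m^2 \geq N^{3/10}$ for $N \geq N_0$, together with the estimate $\lambda = O(N^{1/5}(\lg N)^{6/5}/(\lg\lg N)^{3/5})$ that is immediate from the definition.

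For complexity, Step \ref{step:params} is polylogarithmic in $N$ (sieve of Eratosthenes plus a small product tree over $O(\lg N)$ primes), and Step \ref{step:beta} costs $O(N^{1/5}\lg^2 N/(\lg\lg N)^{1/2})$ by Theorem \ref{thm:2018}; both are strictly dominated by the target. The real work is to check that each of the three terms of Proposition \ref{prop:search} applied in Step \ref{step:search} stays under $O(N^{1/5}\lg^{16/5} N/(\lg\lg N)^{3/5})$. The term $m \lg N (\lg\lg N)^2$ is immediately negligible. For the other two I would invoke the second half of Lemma \ref{lem:pnt-mertens} to obtain $\phi(m)/m = \Theta(1/\lg B) = \Theta(1/\lg\lg N)$, and note that by construction $m \cdot m_0 = \Theta(N^{1/5}(\lg\lg N)^{2/5}/(\lg N)^{4/5})$ (the additive error $m = O(N^{1/20})$ from the ceiling is absorbed). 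Substituting then makes both
\[
\phi(m)\, m_0 \lg^4 N \quad\text{and}\quad \frac{N^{1/2}\lg^2 N}{(m\cdot m_0)^{3/2}}
\]
simplify to $\Theta(N^{1/5}\lg^{16/5} N/(\lg\lg N)^{3/5})$, matching the claim.

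The main ``obstacle'' is really just the parameter bookkeeping: the value of $m_0$ is engineered precisely to balance the giantstep/collision term $\phi(m)m_0\lg^4 N$ against the babystep term $N^{1/2}\lg^2 N/(m\cdot m_0)^{3/2}$ after absorbing the $\Theta(1/\lg\lg N)$ Mertens savings, and this balance is exactly where the $(\lg\lg N)^{3/5}$ improvement over \cite{Har-onefifth} materialises. The constant $30$ in $B = \lfloor\lg N/30\rfloor$ is slack: large enough to keep $m^2$ safely below $N^{2/5}$ (so the order hypothesis on $\beta^{m^2}$ comes for free) yet small enough that Mertens delivers a genuine $1/\lg\lg N$ saving. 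One then chooses $N_0$ large enough to absorb the various hidden $o(1)$ and $\Theta$ factors simultaneously.
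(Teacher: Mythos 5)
Your proposal is correct and follows essentially the same route as the paper's proof: bound $m$ via the Prime Number Theorem (giving $m<N^{1/20}$), verify the order precondition $\ord_N(\beta^{m^2})>N^{3/10}\gg 2\lambda+1$, apply Mertens to get $\phi(m)/m=\Theta(1/\lg\lg N)$, and substitute $m\cdot m_0=\Theta(N^{1/5}(\lg\lg N)^{2/5}/(\lg N)^{4/5})$ into the three terms of Proposition \ref{prop:search}. Your explicit remark that the ceiling error in $m_0$ is absorbed, and your explicit computation of both balanced terms, are slightly more detailed than the paper's ``easy to check'' but match it exactly.
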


\begin{proof}
We start by discussing the choice of $m$ in Step \ref{step:params}.
By the Prime Number Theorem (see Lemma \ref{lem:pnt-mertens})
we have $m = e^{(1+o(1))B}$.
Since
\[
B = \frac{\log N}{30 \log 2} + O(1) = \frac{\log N}{20.794\ldots} + O(1),
\]
we find that for large $N$,
\begin{equation}
\label{eq:m-range}
N^{1/21} < m < N^{1/20}.
\end{equation}
We may compute $m$ by simply enumerating the primes up to $B$ and
multiplying them together; this may be done in time $B^{O(1)} = (\lg N)^{O(1)}$.
The rest of Step \ref{step:params} (computing $B$, $m_0$ and $\gcd(N,m)$)
may also be carried out in time $(\lg N)^{O(1)}$.

Step \ref{step:beta} runs in time
\[
O\left(\frac{N^{1/5}\lg^2 N}{(\lg\lg N)^{1/2}} \right),
\]
which is negligible.
Assume that we do not find factors of $N$ or prove $N$ prime.
We then obtain $\beta \in \Z_N^*$ such that $\ord_N(\beta)>N^{2/5}$.

For the invocation of Algorithm \ref{alg:search} in Step \ref{step:search},
we must check the precondition $\ord_N(\beta^{m^2}) \geq 2\lambda+1$,
where we recall that $\lambda = \lceil 4 N^{1/2}/(m \cdot m_0)^{3/2} \rceil$.
On one hand, \eqref{eq:m-range} yields
\[
\ord_N(\beta^{m^2}) \geq \frac{\ord_N(\beta)}{m^2} > \frac{N^{2/5}}{(N^{1/20})^2} = N^{3/10}.
\]
On the other hand, the definition of $m_0$, together with \eqref{eq:m-range}, implies that
\begin{equation}
\label{eq:mm0-estimate}
m_0 \cdot m \asymp \frac{N^{1/5} (\lg \lg N)^{2/5}}{(\lg N)^{4/5}},
\end{equation}
so
\[
2\lambda + 1 = \frac{N^{1/2}}{(N^{1/5+o(1)})^{3/2}} = N^{1/5+o(1)}.
\]
Therefore certainly $\ord_N(\beta^{m^2}) \geq 2\lambda+1$ for large $N$.
We conclude that Step \ref{step:search} succeeds in factoring $N$ or proving $N$ prime.

The hypotheses $m, m_0 = O(N)$ of
Proposition \ref{prop:search} are certainly satisfied,
so the cost of Step \ref{step:search} is given by \eqref{eq:search}.
The first term of \eqref{eq:search} is negligible thanks to \eqref{eq:m-range}.
To estimate the second term,
we note that Mertens' theorem (Lemma \ref{lem:pnt-mertens}) implies that
\[
\frac{\phi(m)}{m} =
\prod_{\substack{2 \leq r \leq B \\ \text{\rm $r$ prime}}} \frac{r-1}{r}
= O\Big( \frac{1}{\log B} \Big)
= O\Big( \frac{1}{\lg \lg N} \Big).
\]
Using this estimate together with \eqref{eq:mm0-estimate},
it is easy to check that the second and third terms in \eqref{eq:search}
simplify to  the desired bound
$O(N^{1/5} (\lg N)^{16/5} / (\lg \lg N)^{3/5})$,
and hence our choice for the size of $m \cdot m_0$ balances the
asymptotic contribution from these two terms.
Taking into account the discussion in Section \ref{sec:intro},
this also completes the proof of Theorem \ref{thm:main}.
\end{proof}

\bibliographystyle{amsalpha}
\bibliography{onefifthloglog}

\end{document}